 \numberwithin{equation}{section}
\newtheorem{theorem}{Theorem}[section]
\newtheorem{lemma}[theorem]{Lemma}
\newtheorem{proposition}[theorem]{Proposition}
\theoremstyle{definition}
\newtheorem{example}[theorem]{Example}
\theoremstyle{remark}
\newtheorem{remark}[theorem]{Remark}
\newtheorem{notation}[theorem]{Notation}
\newcommand{\kk}{\ensuremath{\Bbbk}} 
\renewcommand{\AA}{\ensuremath{\mathbb{A}}}
\newcommand{\NN}{\ensuremath{\mathbb{N}}} 
\newcommand{\PP}{\ensuremath{\mathbb{P}}}
\newcommand{\QQ}{\ensuremath{\mathbb{Q}}} 
\newcommand{\RR}{\ensuremath{\mathbb{R}}} 
\newcommand{\VV}{\ensuremath{\mathbb{V}}}
\newcommand{\ZZ}{\ensuremath{\mathbb{Z}}} 
\newcommand{\cM}{\mathcal{M}}
\def \O{\mathcal{O}}
\newcommand{\one}{\ensuremath{(\mathrm{i})}}
\newcommand{\two}{\ensuremath{(\mathrm{ii})}}
\renewcommand{\div}{\operatorname{div}}
\DeclareMathOperator{\End}{End}
\DeclareMathOperator{\git}{\!/\!\!/\!}
\DeclareMathOperator{\GL}{GL} 
\DeclareMathOperator{\Hom}{Hom}
\DeclareMathOperator{\Pic}{Pic}
\DeclareMathOperator{\rank}{rk}
\DeclareMathOperator{\Spec}{Spec}
\DeclareMathOperator{\supp}{supp}
\newcommand{\vv}{\bm{\mathrm v}}
\newcommand{\hd}{\operatorname{h}}
\newcommand{\Proj}{\operatorname{Proj}}
\newcommand{\Rep}{\operatorname{Rep}} 
\newcommand{\tl}{\operatorname{t}}
\newcommand{\Young}{\operatorname{Young}}
\def \TQ{Q^\prime} 
\def \p{\ell} 
\def \W{\mathcal{W}} 
\def \r{\underline{r}} 
\def \du{\underline{d}} 
\begin{document}

\title{Reconstructing toric quiver flag varieties from a tilting bundle}

\author{Alastair Craw} 
\address{Department of Mathematical Sciences, 
University of Bath, 
Claverton Down, 
Bath BA2 7AY, 
United Kingdom.}
\email{a.craw@bath.ac.uk / J.J.Green@bath.ac.uk}
\urladdr{http://people.bath.ac.uk/ac886/ and http://people.bath.ac.uk/jjg24/}
 
\author{James Green} 

\subjclass[2010]{14A22 (Primary); 14M25, 16G20, 18E30 (Secondary).}
\keywords{Moduli spaces of quiver representations,  multigraded linear series, tilting bundles.}

\begin{abstract}
We prove that every toric quiver flag variety $Y$ is isomorphic to a fine moduli space of cyclic modules over the algebra $\End(T)$ for some tilting bundle $T$ on $Y$. This generalises the well known fact that $\mathbb{P}^n$ can be recovered from the endomorphism algebra of  $\bigoplus_{0\leq i\leq n} \mathcal{O}_{\mathbb{P}^n}(i)$. 
\end{abstract}

\maketitle

\section{Introduction}
 Nakajima~\cite[Section~3]{Nak96} introduced certain framed moduli spaces associated to a quiver, and the first author showed that these `quiver flag varieties' admit a tilting bundle \cite{Cra11}, generalising the construction of Beilinson~\cite{Bei78} and Kapranov~\cite{Kap88}. Here we extend this link further in the toric case by showing that every toric quiver flag variety can be reconstructed as a fine moduli space of cyclic modules over the endomorphism algebra of the tilting bundle.

 Before stating the main result we recall the construction and basic geometric properties of quiver flag varieties, also known as `framed quiver moduli'; references for this material include Nakajima~\cite[Section~3]{Nak96}, Reineke~\cite{Rei08} and Craw~\cite{Cra11}. Let $\kk$ be an algebraically closed field of characteristic zero and let $Q$ be a finite, connected, acyclic quiver with a unique source. Write $Q_0=\{0,1,\dots,\p\}$ for the vertex set, where $0$ is the source, and $Q_1$ for the arrow set, where for each $a\in Q_1$ we write $\hd(a)$ and $\tl(a)$ for the head and tail of $a$ respectively. Fix a dimension vector $\r=(r_i)\in \NN^{\p+1}$ satisfying $r_0=1$. The group $G:= \prod_{i=0}^{\p}\GL(r_i)$ acts by conjugation on the space $\Rep(Q,\r)=\bigoplus_{a\in Q_1} \Hom(\Bbbk^{r_{\tl(a)}}, \Bbbk^{r_{\hd(a)}})$ of representations of $Q$ of dimension vector $\r$, and we define the \emph{quiver flag variety} associated to the pair $(Q,\r)$ to be the GIT quotient
\[
Y:= \Rep(Q,\r)\git_\chi G
\]
for the special choice of linearisation $\chi:= (-\sum_{i=1}^{\p} r_i, 1,\dots, 1)\in G^{\vee}$. This GIT quotient is non-empty if and only if the inequality
\begin{equation}
\label{eqn:risi}
r_i\leq s_i:=\sum_{\{a\in Q_1 \mid \hd(a)=i\}} r_{\tl(a)}
\end{equation}
 holds for each $i>0$, in which case $Y$ is a smooth Mori Dream Space of dimension $\sum_{i=1}^\p r_i(s_i-r_i)$. In fact, $Y$ can be obtained from a tower of Grassmann-bundles
\begin{equation}
 \label{eqn:tower}
 Y:=Y_\p\longrightarrow Y_{\p-1}\longrightarrow \cdots \longrightarrow Y_1\longrightarrow Y_0=\Spec \kk,
\end{equation}
  where at each stage, $Y_i$ is isomorphic to the Grassmannian of rank $r_i$ quotients of a fixed locally-free sheaf of rank $s_i$ on $Y_{i-1}$; see \cite[Theorem~3.3]{Cra11}. Hereafter we assume that the inequality \eqref{eqn:risi} is strict for each $i>0$ to avoid degeneracy in the tower. 

Quiver flag varieties naturally carry a collection of vector bundles $\W_1,\dots, \W_\p$ that determine many of their algebraic invariants. Indeed, for  $i>0$, the Grassmann-bundle $Y_i$ over $Y_{i-1}$ carries a tautological quotient bundle $\mathcal{V}_i$ of rank $r_i$, and we write $\W_i:=\pi_i^*(\mathcal{V}_i)$ for the globally-generated bundle of rank $r_i$ on $Y$ obtained as the pullback under the morphism $\pi_i\colon Y\to Y_i$ in the tower. It follows from the construction that the invertible sheaves $\det(\W_1),\dots, \det(\W_\p)$ provide an integral basis for the Picard group of $Y$. More generally, the results of Beilinson~\cite{Bei78} and Kapranov~\cite{Kap88} extend to all quiver flag varieties as follows. Let $\Young(k,l)$ denote the set of Young diagrams with no more than $k$ columns and $l$ rows. Recall that for any vector bundle $\W$ of rank $r$ and for $\lambda \in \Young(k,l)$, we obtain a vector bundle $\mathbb{S}^{\lambda}\W$ whose fibre over each point is the irreducible $\GL(r)$-module of highest weight $\lambda$.
    
 \begin{theorem}[\cite{Cra11}]
 \label{thm:tilting}
 The vector bundle on $Y$ given by
  \begin{equation}
  \label{eqn:tilting}
  E:= \bigoplus_{1\leq i\leq \p, \; \lambda_i\in \Young(s_i-r_i,r_i)} \mathbb{S}^{\lambda_1}\W_1\otimes \dots \otimes \mathbb{S}^{\lambda_\p}\W_\p
  \end{equation}
 is a tilting bundle. In particular, the bounded derived category of coherent sheaves on $Y$ is equivalent to the bounded derived category of finite-dimensional modules over $A:=\End_{\mathcal{O}_Y}(E)$. 
  \end{theorem}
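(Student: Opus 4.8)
The plan is to establish the two defining properties of a tilting bundle---that $E$ classically generates $D^b(\coh Y)$ and that $\Ext^k_{\cO_Y}(E,E)=0$ for all $k>0$---and then to invoke the standard tilting formalism of Bondal and Van den Bergh to produce the stated derived equivalence. Since $E$ is visibly a vector bundle, all the work lies in these two cohomological statements, and I would prove both by induction along the tower \eqref{eqn:tower}, the inductive engine being the relative form of the results of Beilinson and Kapranov on a single Grassmann-bundle. For $i>0$ write $\pi_i\colon Y_i\to Y_{i-1}$ for the structure map, let $\mathcal{V}_i$ be its tautological rank-$r_i$ quotient of (the pullback of) the rank-$s_i$ bundle $\mathcal{S}_i$ on $Y_{i-1}$, which by construction is the direct sum $\bigoplus_{\{a\in Q_1\mid\hd(a)=i\}}\W_{\tl(a)}$ of the earlier tautological bundles, and set $\mathcal{R}_i:=\bigoplus_{\lambda_i\in\Young(s_i-r_i,r_i)}\mathbb{S}^{\lambda_i}\mathcal{V}_i$. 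Denoting by $E_i$ the bundle on $Y_i$ built as in \eqref{eqn:tilting} from the first $i$ tautological bundles, one has $E_i=\pi_i^*E_{i-1}\otimes\mathcal{R}_i$ and $E=E_\p$, so it suffices to prove that each $E_i$ is tilting, the base case $Y_0=\Spec\kk$ being trivial.

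For generation I would use the relative Beilinson resolution of the diagonal of $Y_i\times_{Y_{i-1}}Y_i$, which expresses the identity functor on $D^b(\coh Y_i)$ in terms of the summands of $\mathcal{R}_i$ and of $R\pi_{i*}$; concretely, every object of $D^b(\coh Y_i)$ lies in the thick subcategory generated by the bundles $\pi_i^*(F)\otimes\mathbb{S}^{\lambda_i}\mathcal{V}_i$ with $F\in D^b(\coh Y_{i-1})$ and $\lambda_i$ in the box. The inductive hypothesis that $E_{i-1}$ generates $D^b(\coh Y_{i-1})$ then shows that $E_i$ generates $D^b(\coh Y_i)$. This half of the argument is essentially formal.

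The substance lies in the vanishing of higher self-extensions. Writing a typical summand as $E_i^{(\lambda)}=\pi_i^*E_{i-1}^{(\lambda')}\otimes\mathbb{S}^{\lambda_i}\mathcal{V}_i$ with $\lambda'=(\lambda_1,\dots,\lambda_{i-1})$, the projection formula gives
\[
\Ext^k_{Y_i}\!\bigl(E_i^{(\lambda)},E_i^{(\mu)}\bigr)=H^k\!\Bigl(Y_{i-1},\,(E_{i-1}^{(\lambda')})^\vee\otimes E_{i-1}^{(\mu')}\otimes R\pi_{i*}\bigl((\mathbb{S}^{\lambda_i}\mathcal{V}_i)^\vee\otimes\mathbb{S}^{\mu_i}\mathcal{V}_i\bigr)\Bigr).
\]
Because Kapranov's collection is a full \emph{strong} exceptional collection on each fibre $\Gr(r_i,s_i)$, cohomology and base change (all the sheaves involved being flat over $Y_{i-1}$) forces $R^{>0}\pi_{i*}\bigl((\mathbb{S}^{\lambda_i}\mathcal{V}_i)^\vee\otimes\mathbb{S}^{\mu_i}\mathcal{V}_i\bigr)=0$, so only the locally free sheaf $\mathcal{F}_{\lambda_i\mu_i}:=\pi_{i*}\bigl((\mathbb{S}^{\lambda_i}\mathcal{V}_i)^\vee\otimes\mathbb{S}^{\mu_i}\mathcal{V}_i\bigr)$ survives. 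Relative Borel--Weil--Bott identifies $\mathcal{F}_{\lambda_i\mu_i}$ with a direct sum of Schur bundles $\mathbb{S}^\nu\mathcal{S}_i$, and the Cauchy (Littlewood--Richardson) formula rewrites each such $\mathbb{S}^\nu\mathcal{S}_i$, since $\mathcal{S}_i$ is a direct sum of the earlier $\W_{\tl(a)}$, as a sum of products of Schur bundles of $\W_1,\dots,\W_{i-1}$.

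The main obstacle is exactly this last step: the partitions $\nu$ produced by pushforward need not have their parts confined to the boxes $\Young(s_j-r_j,r_j)$, so the bundles $(E_{i-1}^{(\lambda')})^\vee\otimes E_{i-1}^{(\mu')}\otimes\mathcal{F}_{\lambda_i\mu_i}$ are generally not summands of $\End(E_{i-1})$, and the bare inductive hypothesis $\Ext^{>0}(E_{i-1},E_{i-1})=0$ does not apply to them directly. I would therefore strengthen the induction so as to control the higher cohomology of \emph{all} products of Schur bundles of $\W_1,\dots,\W_{i-1}$ arising in this manner, verifying it by a further descent through the tower in which relative Borel--Weil--Bott, applied fibre by fibre together with the strictness of \eqref{eqn:risi}, shows that the successive direct images remain acyclic in positive degree and that no higher cohomology is ever created. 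Granting both properties on $Y=Y_\p$, the bundle $E$ is tilting, and the equivalence of $D^b(\coh Y)$ with the bounded derived category of finite-dimensional $A$-modules is the standard consequence.
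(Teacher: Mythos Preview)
This theorem is not proved in the present paper; it is quoted from \cite{Cra11} and stated here without argument, so there is no proof in this paper to compare your proposal against. That said, your outline is the natural one and matches the strategy of the cited reference: induct along the tower \eqref{eqn:tower}, use the relative form of Kapranov's full strong exceptional collection on each Grassmann bundle for generation, and combine the projection formula with relative Borel--Weil--Bott for the Ext-vanishing. You also correctly isolate the one genuine subtlety---that the pushforwards $\pi_{i*}\bigl((\mathbb{S}^{\lambda_i}\mathcal{V}_i)^\vee\otimes\mathbb{S}^{\mu_i}\mathcal{V}_i\bigr)$ produce Schur functors of $\mathcal{S}_i$ whose partitions need not remain in the smaller boxes---so that a strengthened vanishing statement is required for the induction to close; this is indeed where the work lies in \cite{Cra11}.
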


 This result answered affirmatively the question of Nakajima~\cite[Problem~3.10]{Nak96}.

\smallskip

 We now describe our main result. Work of Bergman-Proudfoot~\cite[Theorem~2.4]{BP08} compares any smooth projective variety admitting a tilting bundle to a fine moduli space of modules over the endomorphism algebra. To define the relevant moduli space for the tilting  bundle $E$ from \eqref{eqn:tilting}, list the indecomposable summands as $E_0, E_1,\dots, E_n$ with $E_0\cong \mathcal{O}_Y$, and consider the dimension vector $\vv:=(v_j)\in \NN^{n+1}$ satisfying $v_j:= \rank(E_j)$ for all $0\leq j\leq n$. For a special choice of `0-generated' stability condition $\theta$ (see Section~\ref{sec:reduction}), we consider the fine moduli space $\mathcal{M}(A,\vv,\theta)$ of isomorphism classes of $\theta$-stable $A$-modules of dimension vector $\vv$ that was constructed by King~\cite{Kin94} using GIT. Since each bundle $E_j$ is globally-generated, an observation of Craw--Ito--Karmazyn~\cite[Theorem~1.1]{CIK17} induces a universal morphism  
 \begin{equation}
 \label{eqn:universal}
 f_E\colon Y\longrightarrow \mathcal{M}(A,\vv,\theta),
 \end{equation}
 and in our case this is a closed immersion. In fact, \cite[Theorem~2.4]{BP08} implies that $f_E$ identifies $Y$ with a connected component of $\mathcal{M}(A,\vv,\theta)$, because $Y$ is smooth, $E$ is a tilting bundle, and our stability condition $\theta$ is `great'.

 Our main result concerns the special case when $r_i=1$ for all $1\leq i\leq \p$, in which case $G$ is an algebraic torus and therefore $Y$ is a toric variety; we call $Y$ a \emph{toric quiver flag variety}. The toric fan $\Sigma$ can be described directly in this case (see \cite[p1517]{CS08}), and $Y$ is a tower of projective space bundles via \eqref{eqn:tower}. We can say the following:
 
\begin{theorem}
\label{thm:main}
Let $Y$ be a toric quiver flag variety. The morphism $f_E\colon Y\to \mathcal{M}(A,\vv,\theta)$ from \eqref{eqn:universal} is an isomorphism.
\end{theorem}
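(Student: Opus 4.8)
The plan is to leverage the fact, quoted above from Bergman--Proudfoot, that $f_E$ is a closed immersion identifying $Y$ with a connected component of $\mathcal{M}(A,\vv,\theta)$: it then suffices to prove that $\mathcal{M}(A,\vv,\theta)$ is connected, since a connected scheme coincides with any of its connected components, forcing the clopen immersion $f_E$ to be an isomorphism. So the entire content of the theorem reduces to a connectedness statement for the moduli space.

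First I would make the toric nature of the moduli space explicit. Since $r_i=1$ for all $1\leq i\leq \p$, every summand $E_j$ is a line bundle, so $\vv=(1,\dots,1)$ and $A=\End_{\cO_Y}(E)$ is the path algebra of a bound quiver $(Q_A,I_A)$ whose vertices are indexed by $E_0,\dots,E_n$ and whose arrows form a basis of irreducible homomorphisms between the line bundles $E_j$. The scheme $\Rep(A,\vv)$ of representations of dimension vector $\mathbf 1$ then carries an action of the torus $T:=(\kk^\times)^{n+1}/\kk^\times$, and $\mathcal{M}(A,\vv,\theta)$ is the GIT quotient $\Rep(A,\vv)\git_\theta T$; in particular it is a (possibly non-normal) toric variety. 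Because $E_0\cong\cO_Y$ and $\theta$ is $0$-generated, each $\theta$-stable module is cyclic, generated at the vertex $0$, and is therefore determined by the values its arrows take along the paths emanating from $0$.

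The core of the argument is then to show that no $\theta$-stable $A$-module of dimension vector $\mathbf 1$ lies outside the image of $f_E$, equivalently that $\Rep(A,\vv)^{\theta\text{-ss}}$ is connected. I would do this by matching $\mathcal{M}(A,\vv,\theta)$ with $Y$ combinatorially. The characters recorded by the arrows of $Q_A$ are differences of the classes $\det(\W_1),\dots,\det(\W_\p)$, which form an integral basis of $\Pic(Y)$, and the relations $I_A$ are generated by the binomial (commutativity) relations arising from the multiplicative structure of sections of these line bundles on the toric variety $Y$. Using the explicit description of the fan $\Sigma$ of $Y$ (see \cite{CS08}), I would identify the cone data of the toric quotient $\Rep(A,\vv)\git_\theta T$ with $\Sigma$, exhibiting a single dense $T$-orbit whose closure is all of $\mathcal{M}(A,\vv,\theta)$ and which is precisely the image under $f_E$ of the open torus of $Y$. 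This yields at once the connectedness of $\mathcal{M}(A,\vv,\theta)$ and the surjectivity of $f_E$.

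The main obstacle is this last step: controlling the bound quiver of $A$ tightly enough to rule out \emph{exotic} $\theta$-stable modules supported away from the geometric locus $f_E(Y)$. Concretely, one must show that every cyclic $\mathbf 1$-dimensional representation satisfying the relations $I_A$ arises from a closed point of $Y$, which requires a careful combinatorial analysis of the paths from the source vertex $0$ modulo $I_A$ together with the way the stability condition $\theta$ selects among them. I expect the strictness assumption on \eqref{eqn:risi} and the toric (binomial) form of the relations to be exactly what makes this analysis succeed, so that $\Rep(A,\vv)^{\theta\text{-ss}}$ is irreducible and $f_E$ is an isomorphism.
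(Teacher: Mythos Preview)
Your outer wrapper---reduce via Bergman--Proudfoot to connectedness of $\mathcal{M}(A,\vv,\theta)$---is legitimate, and it differs slightly from the paper's framing: the paper does not invoke the connected-component statement in the proof itself, but instead uses Craw--Smith \cite{CS08} to realise both $\mathcal{M}(A,\vv,\theta)$ and the image $f_E(Y)$ as explicit GIT quotients $\VV(I_R)\git_\theta T$ and $\VV(I_{\TQ})\git_\theta T$ of subschemes of the same affine space, and then shows directly that every $\theta$-stable $T$-orbit in $\VV(I_R)$ already meets $\VV(I_{\TQ})$ (Proposition~\ref{prop:reduction}). Either route collapses to the same hard step: ruling out exotic $\theta$-stable modules.

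The genuine gap is that you do not supply that step, and your sketch of it rests on a misconception. You propose to ``identify the cone data of the toric quotient $\Rep(A,\vv)\git_\theta T$ with $\Sigma$, exhibiting a single dense $T$-orbit''. But $I_R$ is merely a binomial ideal, and binomial ideals routinely cut out \emph{reducible} schemes; the whole question is whether $\VV(I_R)^{\theta\text{-ss}}$ carries irreducible components beyond the closure of the orbit coming from the big torus of $Y$. One cannot read off a fan until irreducibility (and normality) are already known, which is precisely what is at stake---so ``identify the cone data with $\Sigma$'' is the conclusion, not a method. The paper confronts this head-on: given an arbitrary stable point $w\in\VV(I_R)\setminus\VV(B_{\TQ})$, it runs an induction over the vertices of the tilting quiver $\TQ$ (exploiting the cuboid description of $\TQ_0$ from Lemma~\ref{lem:tiltingarrows} and hand-picked length-$2$, $3$ and $4$ binomial relations, as in Figures~\ref{fig:relations1}--\ref{fig:relations3}) to rescale $w$ coordinate by coordinate into $\VV(I_{\TQ})$. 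That induction---not any abstract toric bookkeeping---is the content of the theorem, and it is exactly the ``careful combinatorial analysis'' you defer in your final paragraph. Without it the proposal is a strategy, not a proof.
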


 As a result, toric quiver flag varieties provide a new class of examples where the programme of Bergman-Proudfoot~\cite{BP08} can be carried out in full, enabling one to reconstruct the variety from the tilting bundle. The special case where $Y$ is isomorphic to projective space recovers the well-known result that $\mathbb{P}^n$ can be reconstructed from the tilting bundle $\bigoplus_{0\leq i\leq n} \mathcal{O}_{\mathbb{P}^n}(i)$ of Beilinson~\cite{Bei78}. Theorem~\ref{thm:main} therefore provides further evidence that toric quiver flag varieties provide good multigraded analogues of projective space.

\subsection*{Acknowledgements} We thank the anonymous referee for a number of helpful comments. The first author was supported in part by EPSRC grant EP/J019410/1, and the second author is supported by a doctoral studentship from EPSRC. 
 
\section{The reduction step}
\label{sec:reduction}
Our assumption gives $1 = r_i = \rank(\mathcal{W}_i)$ for $1\leq i\leq \p$, so the tilting bundle from \eqref{eqn:tilting} is simply the direct sum of line bundles
 \begin{equation}
 \label{eqn:torictilt}
 E=\bigoplus_{1\leq i \leq \p, \ 0\leq m_i< s_i } \W_1^{\otimes m_1}\otimes \cdots \otimes \W_\p^{\otimes m_\p}
 \end{equation}
 on $Y$. Set $n+1:=\prod_{1\leq i\leq \ell} s_i$, and list the indecomposable summands from \eqref{eqn:torictilt} as $E_0, \dots, E_{n}$ with $E_0\cong \mathcal{O}_Y$. Consider the endomorphism algebra $A:= \End_{\mathcal{O}_Y}(E)$ and the dimension vector $\vv:=(1, \dots, 1)\in \ZZ^{n+1}$.
 
The moduli space that features in Theorem~\ref{thm:main} is an example of those constructed originally by King~\cite{Kin94}. To introduce our choice of stability condition, first set $\theta^\prime=(-n,1,1,\dots,1)\in \Hom(\ZZ^{n+1},\QQ)$. An $A$-module $M=\bigoplus_{0\leq j\leq n} M_j$ of dimension vector $\vv$ is $\theta^\prime$-stable iff $M$ is generated as an $A$-module by any nonzero element of $M_0$; any such stability condition is called \emph{0-generated}. Since $\vv$ is primitive and since every $\theta^\prime$-semistable $A$-module of dimension vector $\vv$ is $\theta^\prime$-stable (see, for example, \cite[Proof of Proposition~3.8]{CS08}), King~\cite[Proposition~5.3]{Kin94} constructs the fine moduli space $\mathcal{M}(A,\vv,\theta^\prime)$ of isomorphism classes of $\theta^\prime$-stable $A$-modules of dimension vector $\vv$ as a GIT quotient. In particular, $\mathcal{M}(A,\vv,\theta^\prime)$ comes with an ample bundle $\mathcal{O}(1)$. Let $k\geq 1$ denote the smallest positive integer such that $\mathcal{O}(k)$ is very ample. Then $\theta:=k\theta'$ is also a 0-generated stability condition, and we consider the fine moduli space $\cM(A,\vv, \theta)$ of $\theta$-stable $A$-modules of dimension vector $\vv$; this moduli space is the `multigraded linear series' of $E$ in the sense of \cite[Definition~2.5]{CIK17}.
 
 Since each indecomposable summand of $E$ from \eqref{eqn:torictilt} is globally-generated, we deduce from \cite[Theorem~2.6]{CIK17} that the universal property of $\cM(A,\vv, \theta)$ gives a morphism 
 \[
 f_E\colon Y\longrightarrow \mathcal{M}(A,\vv,\theta)
 \]
 and, moreover, $f_E$ is a closed immersion because the line bundle $\bigotimes_{0\leq j\leq n} E_j$ is very ample. This puts us in the situation studied by Craw--Smith~\cite{CS08}, where it is possible to give an explicit GIT quotient description for both the moduli space $\mathcal{M}(A,\vv,\theta)$ and the image of the universal morphism $f_E$. Theorem~\ref{thm:main} will follow once we prove that these two GIT quotients coincide.
 
 \medskip
 
To describe $\mathcal{M}(A,\vv,\theta)$ as a GIT quotient, we first present the algebra $A=\End_{\mathcal{O}_Y}(E)$ using the bound quiver of sections $(\TQ,R)$ as follows. The quiver $\TQ$ has vertex set $\TQ_0=\{0,1,\dots,n\}$ and an arrow from vertex $i$ to $j$ for each \emph{irreducible}, torus-invariant section of $E_{j}\otimes E_{i}^{-1}$, i.e., the corresponding homomorphism from $E_i$ to $E_j$ does not factor through some $E_k$ with $k \neq i, j$. To each arrow $a\in \TQ_1$ we associate the corresponding torus-invariant `labeling divisor' $\div(a)\in \mathbb{N}^{\Sigma(1)}$, where $\Sigma(1)$  denotes the set of rays of the fan of $Y$. The two-sided ideal
 \[
 R := \left(p-q\in \kk \TQ \mid p,q \text{ share the same head, tail and labeling divisor}\right)
 \]
 in $\kk \TQ$ satisfies $A\cong \kk \TQ/R$ (see \cite[Proposition~3.3]{CS08}). Denote the coordinate ring of $\mathbb{A}^{\TQ_1}_\kk$ by $\kk[y_a]$, where $a$ ranges over $\TQ_1$. The ideal $R$ in the noncommutative ring $\kk \TQ$ determines an ideal in $\kk[y_a]$ given by
  \begin{equation}
 \label{eqn:IR}
 I_R:=\left(\prod_{a\in \supp(p)} y_a - \prod_{a\in \supp(q)} y_a \in \kk[y_a]\mathrel{\Big|} \begin{array}{c} p, q \text{ share the same head,} \\ \text{tail and labeling divisor} \end{array}\right),
 \end{equation}
 where the support of a path $\supp(p)$ is simply the set of arrows that make up the path. This ideal is homogeneous with respect to the action of $T:= \prod_{0\leq j\leq n} \GL(1)$ by conjugation.  It now follows directly from the definition of King~\cite{Kin94} that 
 \begin{equation}
 \label{eqn:GITmoduli}
 \mathcal{M}(A,\vv,\theta) := \VV(I_R)\git_\theta T := \Proj \bigoplus_{k\geq 0}\big(\kk[y_a]/I_R)_{k\theta},
 \end{equation}
 where $\big(\kk[y_a]/I_R)_{k\theta}$ denotes the $k\theta$-graded piece. In fact, \cite[Proposition~3.8]{CS08} implies that $\mathcal{M}(A,\vv,\theta)$ is the geometric quotient of $\VV(I_R)\setminus \VV(B_{\TQ})$ by the action of $T$, where 
\begin{equation}
\label{eqn:BQ}
B_{\TQ}:=\bigcap_{j=1}^{n}\big( y_a \in \kk[y_a] \mathrel{\big|} \hd(a)=j\big)
\end{equation}
 is the irrelevant ideal in $\kk[y_a]$ that cuts out the $\theta$-unstable locus in $\AA_\kk^{\TQ_1}$.
 
 Our task is to compare \eqref{eqn:GITmoduli} with the GIT quotient description of the image of $f_E$. For this, define a map $\pi\colon \ZZ^{\TQ_1}\rightarrow \ZZ^{\TQ_0}\oplus \ZZ^{\Sigma(1)}$ by setting $\pi(\chi_a)=(\chi_{\hd(a)}-\chi_{\tl(a)},\div(a))$, where $\chi_a$ for $a\in \TQ_1$ and $\chi_i$ for $i\in \TQ_0$ denote the characteristic functions. The $T$-homogeneous ideal
 \begin{equation}
 \label{eqn:IQ}
 I_{\TQ}:=\big(y^u-y^v\in \kk[y_a] \mid u-v \in \ker(\pi)\big)
 \end{equation}
 contains $I_R$ from \eqref{eqn:IR}, and \cite[Proposition~4.3]{CS08} establishes that the image of the universal morphism $f_E$ is isomorphic to the geometric quotient of $\VV(I_{\TQ})\setminus \VV(B_{\TQ})$ by the action of $T$. 
 
 \begin{proposition}
 \label{prop:reduction}
 Suppose that the $T$-orbit of every closed point of $\VV(I_{R})\setminus \VV(B_{\TQ})$ contains a closed point of $\VV(I_{\TQ})\setminus \VV(B_{\TQ})$. Then Theorem~\ref{thm:main} holds.
 \end{proposition}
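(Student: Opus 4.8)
The plan is to show that the stated hypothesis is precisely the assertion that the closed immersion $f_E$ is surjective on closed points, and then to upgrade this bijection to a scheme-theoretic isomorphism using the fact, recorded in the introduction, that $f_E$ identifies $Y$ with a connected component of $\cM(A,\vv,\theta)$.

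First I would recall the two geometric quotient descriptions in play. By \eqref{eqn:GITmoduli} and the discussion following it, $\cM(A,\vv,\theta)$ is the geometric quotient of $\VV(I_R)\setminus\VV(B_{\TQ})$ by $T$, whereas the image of $f_E$ is the geometric quotient of $\VV(I_{\TQ})\setminus\VV(B_{\TQ})$ by $T$. Since $I_R\subseteq I_{\TQ}$ and both ideals are $T$-homogeneous, the closed subvariety $\VV(I_{\TQ})\setminus\VV(B_{\TQ})$ is $T$-invariant, and its inclusion into $\VV(I_R)\setminus\VV(B_{\TQ})$ is exactly what descends to the closed immersion $f_E$.

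The key step is then to read the hypothesis as surjectivity. Because $\theta$ is $0$-generated and every $\theta$-semistable module of dimension vector $\vv$ is $\theta$-stable, both maps are genuine geometric quotients, so the closed points of $\cM(A,\vv,\theta)$ correspond bijectively to the closed $T$-orbits in $\VV(I_R)\setminus\VV(B_{\TQ})$. The hypothesis says that each such orbit meets $\VV(I_{\TQ})\setminus\VV(B_{\TQ})$; as $\VV(I_{\TQ})$ is $T$-invariant, the entire orbit lies there, and it therefore represents a closed point in the image of $f_E$. Hence $f_E$ is surjective on closed points, and being a closed immersion it is a bijection.

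Finally I would pass from bijection to isomorphism, which is the only genuine subtlety since a bijective closed immersion need not be an isomorphism onto a possibly non-reduced target. Here it is resolved for free by invoking the introduction: $f_E$ identifies $Y$ with a connected component of $\cM(A,\vv,\theta)$, so $\im(f_E)$ is open and closed and $f_E\colon Y\to\im(f_E)$ is already an isomorphism. Surjectivity on closed points forces this component to exhaust $\cM(A,\vv,\theta)$, whence $f_E$ is an isomorphism and Theorem~\ref{thm:main} follows. I expect the real difficulty to lie not in this reduction but in the later sections, where the hypothesis—comparing the $T$-orbits in $\VV(I_R)$ with those in $\VV(I_{\TQ})$—must actually be verified; the point of this proposition is simply to isolate that orbit-theoretic comparison as the crux of the argument.
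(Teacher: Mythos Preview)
Your proposal is correct and follows essentially the same route as the paper: both arguments observe that $\VV(I_{\TQ})\subseteq\VV(I_R)$, interpret the hypothesis as the reverse containment at the level of $T$-orbits in the stable locus, and conclude that the closed immersion $f_E$ is surjective. The paper's proof is a two-line version of yours; your extra paragraph invoking the Bergman--Proudfoot connected-component statement to pass from a surjective closed immersion to a scheme-theoretic isomorphism is a welcome point of care that the paper leaves implicit.
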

 \begin{proof}
 The inclusion $\VV(I_{\TQ})\subseteq \VV(I_R)$ always holds, and the assumption ensures that $\VV(I_{R})\git_\theta T\subseteq \VV(I_{\TQ})\git_\theta T$, so the closed immersion $f_E$ is surjective.
 \end{proof}
 
 In Section~\ref{sec:proof} we prove that the assumption of Proposition~\ref{prop:reduction} holds for every toric quiver flag variety $Y$. To illustrate the strategy, we recall the following well-known construction of $\mathbb{P}^n$ using Beilinson's tilting bundle.

\begin{example}
\label{exa:beilinson}
 For the acyclic quiver $Q$ with vertex set $Q_0=\{0,1\}$ and $n+1$ arrows from $0$ to $1$,  the toric quiver flag variety $Y$ is isomorphic to $\mathbb{P}^n$ and the quiver of sections $\TQ$ for the tilting bundle $\bigoplus_{0\leq i\leq n} \mathcal{O}_{\mathbb{P}^n}(i)$ is shown in Figure~\ref{fig:BeilinsonQuiver}; note that $Q$ is a subquiver of $\TQ$.
	\begin{figure}[!ht]
		\begin{center}
			\begin{tikzpicture}
			\tikzset{vertex/.style = {shape=circle,draw,minimum size=3.3em}}
			\tikzset{edge/.style = {->,> = latex'}}
			\node[vertex,thick] (0) at  (0,0) {$0$};
			\node[vertex,thick] (1) at  (2.5,0) {$1$};
			\node[vertex,thick] (p-1) at  (7.5,0) {$n-1$};
			\node[vertex,thick] (p) at  (10,0) {$n$};
			\draw (1.2,0.5) node {$n+1$};
			\draw (8.75,0.5) node {$n+1$};
			\draw (5,0) node {\Large{$\cdots$}};
			\draw[edge] (0) -- (1);
			\draw[edge] (0.59,-0.2) -- (1.9,-0.2);
			\draw[edge] (0.59,0.2) -- (1.9,0.2);
			\draw[] (3.12,0) -- (4,0);
			\draw[] (3.11,-0.2) -- (4,-0.2);
			\draw[] (3.11,0.2) -- (4,0.2);
			\draw[edge] (6,0) -- (6.86,0);
			\draw[edge] (6,-0.2) -- (6.89,-0.2);
			\draw[edge] (6,0.2) -- (6.89,0.2);
			\draw[edge] (p-1) to (p);
			\draw[edge] (8.09,-0.2) -- (9.4,-0.2);
			\draw[edge] (8.09,0.2) -- (9.4,0.2);
			\end{tikzpicture}
		\end{center}
		\caption{The tilting quiver of $\PP^n$.}
         \label{fig:BeilinsonQuiver}
	\end{figure}	
For each $1\leq m\leq n$ and each ray $\rho\in \Sigma(1)$ in the fan of $\mathbb{P}^n$ defining a torus-invariant divisor $D_\rho$, let $a_\rho^m$ denote the arrow with head at $m$ and labeling divisor $\div(a_\rho^m) = D_\rho$. Writing $y_\rho^m \in \kk[y_a]$ for the variable associated to the arrow $a_\rho^m$, we have 
  \begin{equation}
  \label{eqn:IRBeilinson}
  I_R=\big(y^{m+1}_\sigma y^m_\rho-y^{m+1}_\rho y^m_\sigma \in \kk[y_a] \mathrel{\big|} 1\leq m\leq n-1;\; \rho,\sigma\in \Sigma(1)\big).
  \end{equation}
 We claim that a point $(w_\rho^m)\in \VV(I_R)\setminus \VV(B_{\TQ})\subset \mathbb{A}_\kk^{n(n+1)}$ lies in the same $T$-orbit as the point $(v_\rho^m)$ with components $v_\rho^m:= w_\rho^1$ for all $1\leq m\leq n$ and $\rho\in \Sigma(1)$. Clearly $(v_\rho^m)\in \VV(I_{\TQ})\setminus \VV(B_{\TQ})$, so the claim and Proposition~\ref{prop:reduction} show that Theorem~\ref{thm:main} holds for $\mathbb{P}^n$.
  
  To prove the claim, note that since $(w_\rho^m)\not\in \VV(B_{\TQ})$, the $T$-action allows us to assume that for all $1\leq m\leq n$ there exists $\rho(m)\in \Sigma(1)$ such that $w^m_{\rho(m)}= 1$. Then $v^1_{\rho(1)} = 1$, and \eqref{eqn:IRBeilinson} implies that $w^2_{\rho(1)} v^1_\rho=w^2_\rho$ for all $\rho\in \Sigma(1)$. The case $\rho=\rho(2)$ gives $w^2_{\rho(1)}= (v^1_{\rho(2)})^{-1}= (w^1_{\rho(2)})^{-1}$, so 
   \[
   w^2_\rho = v^1_\rho(w^1_{\rho(2)})^{-1}= w^1_\rho(w^1_{\rho(2)})^{-1} \quad \text{for all }\rho\in \Sigma(1).
   \]
 Let the one-dimensional subgroup $\kk^\times\subset T$ scale by $w^1_{\rho(2)}$ at vertex $2$ to obtain a point in the same $T$-orbit as $(w_\rho^m)$ whose components agree with those of $(v_\rho^m)$ for $m=1,2$. Repeating at each successive vertex shows that $(v_\rho^m)$ and $(w_\rho^m)$ lie in the same $T$-orbit as claimed.
\end{example}
 
\section{The tilting quiver}
\label{sec:TQ}
 Before establishing that the assumption of Proposition~\ref{prop:reduction} holds for every toric quiver flag variety, we describe the \emph{tilting quiver} $\TQ$ in detail (see Example~\ref{exa:7arrows}).

 For the vertex set $\TQ_0$, recall that the line bundles $\W_1, \dots, \W_\p$ provide an integral basis for $\Pic(Y)\cong \ZZ^{\ell}$. Since $\TQ_0$ is defined by the summands $\W_1^{\otimes m_1}\otimes \cdots \otimes \W_\p^{\otimes m_\p}$ of the tilting bundle $E$ from \eqref{eqn:torictilt}, it is convenient to realise $\TQ_0$ as the set of lattice points of a cuboid in $\ZZ^\ell\otimes_{\ZZ}\RR$ of dimension $\p$ with side lengths $s_1-1,\dots,s_{\p}-1$. We label the vertex for $\W_1^{\otimes m_1}\otimes \cdots \otimes \W_\p^{\otimes m_\p}$ by the corresponding lattice point $(m_1,\dots,m_\p)\in\ZZ^\p$, giving 
 \[
 \TQ_0=\{(m_1,\dots,m_\p)\in \ZZ^{\ell} \mid 0\leq m_i<s_i\}.
 \]
 We introduce a total order on $\TQ_0$: for $k=(k_1,\dots,k_\p),m=(m_1,\dots,m_\p)\in\TQ_0$, write $k<m$ if $k_i<m_i$ for the largest index $i$ satisfying $k_i\neq m_i$.
 
 For the arrow set $\TQ_1$, note first that $Q$ is the quiver of sections of $\{\mathcal{O}_Y,\W_1,\dots,\W_\p\}$, so the arrows in $Q$ correspond precisely to the torus-invariant prime divisors in $Y$ \cite[Remark~3.9]{CS08}. For $\rho\in \Sigma(1)$ we write $a_\rho\in Q_1$ for the arrow corresponding to the divisor of zeros $D_\rho$ of a torus-invariant section of $\W_{\hd(a_\rho)}\otimes \W_{\tl(a_\rho)}^{-1}$. Each $a_\rho$ may be regarded as an arrow in $\TQ$, so we may identify $Q$ with a complete subquiver of $\TQ$ that we call the \emph{base quiver} in $\TQ$. More generally, translating each $a_\rho$ around the cuboid described in the preceding paragraph (so that the head and tail lie in $\TQ_0$) produces arrows in $\TQ$ that we denote $a_\rho^m\in \TQ_1$ for $m=\hd(a_\rho^m)$ and $D_\rho = \div(a_\rho^m)$. In fact, we have the following:
  
\begin{lemma}
\label{lem:tiltingarrows}
 Every arrow $a\in \TQ_1$ is of the form $a=a^m_\rho$, where $m=\hd(a)$ and $D_\rho=\div(a)$. 
\end{lemma}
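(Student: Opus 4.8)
The plan is to prove that the labelling divisor of an arbitrary arrow $a\in\TQ_1$ is a single prime divisor $D_\rho$, which forces $a$ to be the translate $a^m_\rho$ with $m=\hd(a)$. I identify each vertex of $\TQ_0$ with its lattice point in $\ZZ^\p$ and use that $[D_\rho]=e_{\hd(a_\rho)}-e_{\tl(a_\rho)}$ in $\Pic(Y)\cong\ZZ^\p$, where $e_0=0$; I may assume $\tl(a_\rho)<\hd(a_\rho)$ for every base arrow, since $0,1,\dots,\p$ is the topological order of the acyclic quiver $Q$ underlying the tower \eqref{eqn:tower}. Writing $k=\tl(a)$ and $m=\hd(a)$, the torus-invariant section defining $a$ is an effective torus-invariant divisor $\div(a)=\sum_\rho c_\rho D_\rho$ with $c_\rho\in\NN$ and $[\div(a)]=m-k$; comparing $i$-th coordinates for $1\le i\le\p$ gives the flow identity $(m-k)_i=\sum_{\hd(a_\rho)=i}c_\rho-\sum_{\tl(a_\rho)=i}c_\rho$.

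Assume for contradiction that $\sum_\rho c_\rho\ge 2$. The heart of the argument is to produce a ray $\rho$ with $c_\rho>0$ for which the intermediate point $j:=k+[D_\rho]$ still lies in $\TQ_0$; writing $h=\hd(a_\rho)$ and $t=\tl(a_\rho)$, this amounts to the two conditions (A) $k_h<s_h-1$ and (B) $t=0$ or $k_t>0$, which together give $0\le j_i<s_i$ for all $i$. I would locate such a ray by descending through the rays appearing in $\div(a)$. I would begin with one whose head $h^\ast$ is maximal; no such ray has tail $h^\ast$ (its head would exceed $h^\ast$), so the flow identity gives $(m-k)_{h^\ast}=\sum_{\hd(a_\rho)=h^\ast}c_\rho\ge 1$ and hence $k_{h^\ast}+1\le m_{h^\ast}\le s_{h^\ast}-1$, so (A) holds. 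If (B) also holds the ray is as required; otherwise $t>0$ and $k_t=0$, and since this ray has tail $t$ while $(m-k)_t=m_t\ge 0$, the flow identity forces $\sum_{\hd(a_\rho)=t}c_\rho\ge 1$, producing a ray with head $t$. The new ray satisfies (A) because $k_t=0<s_t-1$ by the standing strict inequality $s_i\ge 2$, and its tail is strictly below $t$. Iterating yields a strictly decreasing sequence of vertices, so the descent must terminate at a ray satisfying both (A) and (B).

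For such a ray the point $j=k+[D_\rho]$ lies in $\TQ_0$ and is distinct from $k$ and $m$ (since $[D_\rho]\ne 0$ and $\div(a)-D_\rho\ne 0$). Then $\div(a)=D_\rho+(\div(a)-D_\rho)$ expresses the defining section as a product of torus-invariant sections of $E_j\otimes E_k^{-1}$ and $E_m\otimes E_j^{-1}$, so the homomorphism $E_k\to E_m$ factors through $E_j$, contradicting the irreducibility of $a$. Therefore $\sum_\rho c_\rho=1$, so $\div(a)=D_\rho$ for a single ray $\rho$; comparing classes then gives $m-k=e_{\hd(a_\rho)}-e_{\tl(a_\rho)}$, identifying $a$ with $a^m_\rho$. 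I expect the descent in the middle paragraph to be the only real obstacle: the maximal-head ray alone need not keep $j$ inside the cuboid $\TQ_0$, and it is exactly the strict inequality $s_i\ge 2$ that allows the descent to proceed whenever a tail coordinate of $k$ vanishes.
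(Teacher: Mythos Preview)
Your proof is correct and follows essentially the same descent argument as the paper: start at a ray in $\div(a)$ with maximal head, use the flow identity to walk down along tails, and use the strict inequality $s_i\ge 2$ to guarantee the walk terminates at a ray whose translate stays inside the cuboid, giving a factorisation. The only cosmetic difference is that the paper tracks constraints at both endpoints $m$ and $m'$ simultaneously (obtaining $m'_j=0$ and $m_j=s_j-1$ at each step), whereas you work entirely at the tail $k=\tl(a)$; your version is slightly more economical but the idea is the same.
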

 
\begin{proof}
For $a\in \TQ_1$, write $\hd(a)=m=(m_1,\dots,m_\p)$ and $\tl(a)=m^\prime=(m^\prime_1,\dots,m^\prime_\p)$, so $\div(a)$ is the divisor of zeros of a section of $\bigotimes_{1\leq i\leq \ell}\mathcal{W}_i^{\otimes(m_i-m^\prime_i)}$. In terms of prime divisors, we have 
\[
\div(a)=\sum_{\rho\in \Sigma(1)} \lambda_{\rho} D_{\rho} \quad\text{ for }\lambda_{\rho}\in \NN.
\]
 Let $1\leq k\leq \p$ be the largest value such that  $\lambda_\rho\neq 0$ for some $\rho\in \Sigma(1)$ satisfying $k=\hd(a_\rho)\in Q_0$. Note that $0\leq m^\prime_k<m_k$, and moreover, $j:=\tl(a_\rho)<k$. Since $\div(a)$ is irreducible, translating $a_\rho$ so that the tail is at vertex $m^\prime$ forces the head to lie outside the cuboid, giving $m^\prime_j=0$ or $m^\prime_k=s_k-1$; similarly, translating $a_\rho$ so that the head is at $m$ forces the tail to lie outside the cuboid, giving $m_j=s_j-1$ or $m_k=0$. Since $0\leq m^\prime_k<m_k$, both $m^\prime_j=0$ and $m_j=s_j-1$ must hold, so $m^\prime_j<m_j$. As a result, there must exist $\sigma\in \Sigma(1)$ satisfying $\lambda_{\sigma}\neq 0$ for $j=\hd(a_{\sigma})$. If we set $i:=\tl(a_{\sigma})$ and repeat the argument above, we deduce that $m^\prime_i<m_i$. Continuing in this way, we eventually find $\tau\in \Sigma(1)$ such that $\lambda_\tau\neq 0$ with $\hd(a_\tau) = 1$ and $\tl(a_\tau)=0$. But then $0=m^\prime_1<m_1=s_1-1$, so we can place a translation of $a_\tau$ with head at $m$ and tail in the cuboid (or tail at $m^\prime$ and head in the cuboid). This shows $\div(a)$ is reducible, a contradiction.
\end{proof}

 \begin{remark}
 \label{rem:ei}
 Since $Q$ is the quiver of sections of $\{\mathcal{O}_Y,\W_1,\dots,\W_\p\}$, the vertices of the base quiver are the vertices $e_0, e_1, \dots, e_\ell\in \TQ_0\subset \ZZ^{\ell}$, where $e_i$ denotes the $i^{\text{th}}$ standard basis vector for $i>0$, and where $e_0:=(0,\dots,0)$.  \end{remark}

The next example illustrates how the base quiver sits inside $\TQ$.

\begin{example}
\label{exa:7arrows}
 The quiver $Q$ shown in Figure~\ref{fig:basequiverQ} defines the toric quiver flag variety $Y = \PP_{Z}(\O(1,0)\oplus \O(0,1))$ where $Z=\PP_{\PP^2}(\O\oplus \O(1))$; the colours of the arrows indicate the distinct labeling divisors. We have $s_1=3$ and $s_2=s_3=2$, so the tilting quiver $\TQ$ has $12$ vertices shown in Figure~\ref{fig:tiltingquiverTQ} using the ordering described above. 
   \begin{figure}[!ht]
   \centering
      \subfigure[]{
      \label{fig:basequiverQ}
       \begin{tikzpicture}[xscale=1.5,yscale=1.5]
			\tikzset{vertex/.style = {shape=circle,draw,minimum size=2em}}
			\tikzset{>=latex}
			\node[vertex] (A) at (0,0) {0};
			\node[vertex] (B) at (2,0) {1};
			\node[vertex] (C) at (0,2) {2};
			\node[vertex] (D) at (2,2) {3};
			\draw [->][green] (A.20) -- (A.20-|B.west);
			\draw [->][black!35!green] (A) -- (B);
			\draw [->][black!70!green] (A.-20) -- (A.-20-|B.west);
			\draw [->][red] (A) -- (C);
			\draw [->][blue] (B) -- (C);
			\draw [->][cyan] (B) -- (D);
			\draw [->][magenta] (C) -- (D);
			\end{tikzpicture} 
       }
      \qquad  \qquad
      \subfigure[]{
       \label{fig:tiltingquiverTQ}
            \begin{tikzpicture}[xscale=1.7,yscale=1.7]
			\tikzset{vertex/.style = {shape=circle,draw,minimum size=2em}}
			
			\tikzset{>=latex}
			
			\node[vertex, very thick] (A) at (0,0) {$e_0$};
			\node[vertex, very thick] (B) at (2,0) {$e_1$};
			\node[vertex, very thick] (C) at (1.3,0.5) {$e_2$};
			\node[vertex, very thick] (D) at (0,2) {$e_3$};
			\node[vertex] (E) at (4,0) {};
			\node[vertex] (F) at (3.3,0.5) {};
			\node[vertex] (G) at (2,2) {};
			\node[vertex] (H) at (1.3,2.5) {};
			\node[vertex] (I) at (5.3,0.5) {};
			\node[vertex] (J) at (4,2) {};
			\node[vertex] (K) at (3.3,2.5) {};
			\node[vertex] (L) at (5.3,2.5) {};
			
			\draw [->][cyan] (B) to [out=165, in=270] (D);
			\draw [->][cyan] (E) to [out=165, in=270] (G);
			\draw [dashed, ->][cyan] (F) to [out=165, in=270] (H);      
         	\draw [dashed, ->][cyan] (I) to [out=165, in=270] (K);
			
			\draw [->][magenta] (C) -- (D);
			\draw [->][magenta] (F) -- (G);
			\draw [->][magenta] (I) -- (J);
			
			\draw [->][red] (A) -- (C);
			\draw [->][red] (B) -- (F);
			\draw [->][red] (E) -- (I);
			\draw [->][red] (D) -- (H);
			\draw [->][red] (G) -- (K);
			\draw [->][red] (J) -- (L);
			
			\draw [->][blue] (B) -- (C);
			\draw [->][blue] (E) -- (F);
			\draw [->][blue] (G) -- (H);
			\draw [->][blue] (J) -- (K);
			
			\draw [->][green] (A.20) -- (A.20-|B.west);
			\draw [->][black!35!green] (A) -- (B);
			\draw [->][black!70!green] (A.-20) -- (A.-20-|B.west);
			
			\draw [->][green] (B.20) -- (B.20-|E.west);
			\draw [->][black!35!green] (B) -- (E);
			\draw [->][black!70!green] (B.-20) -- (B.-20-|E.west);
			
			\draw [->][green] (C.20) -- (C.20-|F.west);
			\draw [->][black!35!green] (C) -- (F);
			\draw [->][black!70!green] (C.-20) -- (C.-20-|F.west);   
            
			\draw [->][green] (F.20) -- (F.20-|I.west);
			\draw [->][black!35!green] (F) -- (I);
			\draw [->][black!70!green] (F.-20) -- (F.-20-|I.west);
			
			\draw [->][green] (D.20) -- (D.20-|G.west);
			\draw [->][black!35!green] (D) -- (G);
			\draw [->][black!70!green] (D.-20) -- (D.-20-|G.west);
			
			\draw [->][green] (G.20) -- (G.20-|J.west);
			\draw [->][black!35!green] (G) -- (J);
			\draw [->][black!70!green] (G.-20) -- (G.-20-|J.west);
			
			\draw [->][green] (H.20) -- (H.20-|K.west);
			\draw [->][black!35!green] (H) -- (K);
			\draw [->][black!70!green] (H.-20) -- (H.-20-|K.west);
			
			\draw [->][green] (K.20) -- (K.20-|L.west);
			\draw [->][black!35!green] (K) -- (L);
			\draw [->][black!70!green] (K.-20) -- (K.-20-|L.west);
			
			\end{tikzpicture}
            }
           \caption{Quivers for $Y$: (a) original quiver $Q$; (b) tilting quiver $\TQ$.}
           \label{fig:bothQuivers}
  \end{figure}
  Note that the base quiver is the complete subquiver of $\TQ$ whose vertices are shown in bold in Figure~\ref{fig:tiltingquiverTQ}. The colour of each arrow of $\TQ$ is determined by its unique translate arrow from the base quiver. 
\end{example}

\section{Proof of Theorem~\ref{thm:main}}
\label{sec:proof}
 In light of Lemma~\ref{lem:tiltingarrows}, each point of $\AA^{\TQ_1}_\kk$ is a tuple $(w_\rho^m)$ where $w_\rho^m\in \kk$ for $\rho\in \Sigma(1)$ and for all relevant $m\in \TQ_0$. Motivated by Example~\ref{exa:beilinson}, we associate to $(w_\rho^m)\in \AA^{\TQ_1}_\kk$ an auxiliary point $(v_\rho^m)\in \VV(I_{\TQ})\subseteq \mathbb{A}^{\TQ_1}_\kk$ whose components satisfy
\begin{equation}
\label{eqn:vrhom}
 v_\rho^m:=w_\rho \text{ for }\rho\in \Sigma(1)\text{ and all relevant }m\in \TQ_0,
\end{equation}
 where for $\rho\in \Sigma(1)$ we write $w_\rho\in \kk$ for the component of the point $(w_\rho^m)$ corresponding to the unique arrow $a_\rho$ in the base quiver satisfying $\div(a_\rho)=D_\rho$. 

\begin{lemma}
If $(w_\rho^m)\not\in \VV(B_{\TQ})$, then $(v_\rho^m)\not\in \VV(B_{\TQ})$.
\end{lemma}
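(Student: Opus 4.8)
The plan is to unwind what membership in $\VV(B_{\TQ})$ means and then exploit the fact that, at the base-quiver vertices $e_1,\dots,e_\ell$, the incoming arrows of $\TQ$ are \emph{exactly} the base arrows. Recall from \eqref{eqn:BQ} that $\VV(B_{\TQ})=\bigcup_{j=1}^{n}\VV\big(y_a\mid \hd(a)=j\big)$, so a point of $\AA^{\TQ_1}_\kk$ lies outside $\VV(B_{\TQ})$ precisely when, at every non-source vertex, at least one incoming arrow has nonzero coordinate. Since $v^m_\rho=w_\rho=w_\rho^{\hd(a_\rho)}$ depends only on $\rho$, the task reduces to showing that for every vertex $m\neq e_0$ there is some $\rho\in\Sigma(1)$ with $a^m_\rho\in\TQ_1$ and $w_\rho\neq 0$.

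First I would record the validity criterion for translates: writing $i=\hd(a_\rho)$ and $j=\tl(a_\rho)$, the translate $a^m_\rho$ lies in $\TQ_1$ if and only if $m_i\geq 1$ and either $j=0$ or $m_j\leq s_j-2$, the two conditions expressing that decreasing the $i$-th coordinate and increasing the $j$-th coordinate keeps $m$ inside the cuboid. Next I would observe that at a base vertex $e_i$ the only incoming arrows are the base arrows with head $e_i$: any other translate $a^{e_i}_\sigma$ with $\hd(a_\sigma)\neq e_i$ has a negative coordinate in its tail $e_i-\hd(a_\sigma)+\tl(a_\sigma)$, forcing it out of the cuboid. As the coordinate of such a base arrow at $e_i$ is exactly $w_\sigma$, the hypothesis $(w^m_\rho)\notin\VV(B_{\TQ})$ applied at $e_i$ yields, for each $1\leq i\leq \ell$, a base arrow $a_{\rho_i}$ with $\hd(a_{\rho_i})=e_i$ and $w_{\rho_i}\neq 0$.

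With these facts in hand I would run a descent on the acyclic base quiver, in which every arrow satisfies $\tl(a)<\hd(a)$. Fix $m\neq e_0$ and suppose for contradiction that for every $\rho$ either $a^m_\rho\notin\TQ_1$ or $w_\rho=0$; by the validity criterion this says that every base arrow $a_\rho$ with $w_\rho\neq 0$ satisfies $m_{\hd(a_\rho)}=0$, or has tail $j=\tl(a_\rho)\geq 1$ with $m_j=s_j-1$. Choose any index $i$ with $m_i\geq 1$. The nonzero base arrow into $e_i$ furnished above has head coordinate $m_i\geq 1$, so it must fall into the second case: its tail $j_1$ satisfies $j_1\geq 1$, $m_{j_1}=s_{j_1}-1$, and $j_1<i$. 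Since $m_{j_1}\geq 1$, the same reasoning applies to the nonzero base arrow into $e_{j_1}$, producing $j_2<j_1$ with $m_{j_2}=s_{j_2}-1$, and so on. A zero tail can never occur, as it would render the corresponding translate valid; hence we obtain an infinite strictly decreasing sequence of positive indices, which is absurd. Therefore some valid translate $a^m_\rho$ has $w_\rho\neq 0$, so $v^m_\rho\neq 0$ and $(v^m_\rho)\notin\VV(B_{\TQ})$.

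The one point requiring genuine care, and the crux of the argument, is the tail condition $m_j\leq s_j-2$ in the validity criterion: a naive choice of arrow (for instance always selecting the largest-index coordinate) can be blocked when the tail coordinate is maximal, and it is precisely the descent through the quiver—terminating because arrows strictly increase the vertex index and the tail can never reach the source without already producing a valid arrow—that circumvents this obstruction. Everything else, namely the combinatorics of the cuboid and the identification of the incoming arrows at each $e_i$, is routine once the validity criterion is established.
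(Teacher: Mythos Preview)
Your proof is correct, and it rests on the same underlying observation as the paper's: at the base vertex $e_i$ the incoming arrows of $\TQ$ are precisely the base arrows with head $i$, so the hypothesis forces $w_\rho\neq 0$ for at least one $\rho$ with $\hd(a_\rho)=i$, for every $1\leq i\leq \ell$. Where you differ is in how you then exhibit a valid translate at an arbitrary vertex $m$. You run a descent: starting from any $i$ with $m_i\geq 1$, the failure of validity pushes you to a strictly smaller tail index $j_1$ with $m_{j_1}=s_{j_1}-1\geq 1$, and you iterate until you reach a contradiction. The paper instead makes a single direct choice: it takes $j$ to be the \emph{minimal} index with $m_j\neq 0$, and then observes that for every base arrow $a_\rho$ with $\hd(a_\rho)=j$ the tail index $j'<j$ automatically satisfies $m_{j'}=0$ (or $j'=0$), so the translate $a_\rho^m$ is valid with no further work. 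Your descent is really an indirect way of locating this minimal index; the paper's choice short-circuits the induction entirely and gives a two-line proof. Both are fine, but the minimal-index trick is worth internalising since it reappears (in the guise of the index $\delta$) throughout the proof of the main theorem.
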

\begin{proof}
Fix $m=(m_1,\dots,m_\p)\in \TQ_0$ and let $1\leq j\leq \p$ be minimal such that $m_j\neq0$. Then for all $\rho$ satisfying $\hd(a_\rho)=j\in Q_0$, the arrow $a_\rho^m$ obtained by translating $a_\rho$ until the head lies at $m$ is an arrow of $\TQ$. At least one of the values $\{w_\rho \mid \hd(a_\rho)=j\}$ is nonzero by assumption, and hence for this value of $\rho$ we have $v_\rho^m=w_\rho\neq 0$ as required.
\end{proof}

 We now establish notation for the proof of Theorem~\ref{thm:main}. For any vertex $k=(k_1,\dots,k_\p)\in \TQ_0$, let $(\TQ(k),R(k))$ denote the bound quiver of sections of the line bundles $\W_1^{\otimes m_1}\otimes \cdots \otimes \W_\p^{\otimes m_\p}$ on $Y$ with $(m_1,\dots,m_\p)\leq k$. Explicitly, $\TQ(k)$ is the complete subquiver of $\TQ$ with vertex set $\TQ(k)_0:=\{m\in\TQ_0 \mid m\leq k \}$, and the ideal of relations $R(k):=\kk\TQ(k)\cap R$ satisfies 
 \[
 \frac{\kk \TQ(k)}{R(k)}\cong \End\Bigg(\bigoplus_{(m_1,\dots,m_\p)\leq k} \W_1^{\otimes m_1}\otimes \cdots \otimes \W_\p^{\otimes m_\p}\Bigg).
 \]
 As in Section~\ref{sec:reduction}, the coordinate ring $\kk[y_\rho^m \mid \rho\in \Sigma(1), m\leq k]$
 of the affine space $\mathbb{A}^{\TQ(k)_1}_\kk$ contains ideals $I_{R(k)}, B_{\TQ(k)}$ and $I_{\TQ(k)}$ defined as in equations \eqref{eqn:IR}, \eqref{eqn:BQ} and \eqref{eqn:IQ} respectively, each of which is homogeneous with respect to the action of $T(k):=\prod_{0\leq i\leq k} \GL(1)$ by conjugation. The projection onto the coordinates indexed by arrows $a_\rho^m$ satisfying $m\leq k$, denoted
 \begin{equation}
 \label{eqn:piN}
 \pi_k\colon \mathbb{A}^{\TQ_1}_\kk\longrightarrow \mathbb{A}^{\TQ(k)_1}_\kk,
 \end{equation}
 is equivariant with respect to the actions of $T$ and $T(k)$. Notice that $\pi_k(\VV(I_R))\subseteq \VV(I_{R(k)})$, $\pi_k(\VV(B_{\TQ}))\subseteq \VV(B_{\TQ(k)})$ and $\pi_k(\VV(I_{\TQ}))\subseteq \VV(I_{\TQ(k)})$.

\begin{proof}[Proof of Theorem~\ref{thm:main}]
 Fix a point $w=(w_\rho^m)\in \VV(I_R)\setminus \VV(B_{\TQ})$ and the corresponding point $v=(v^m_\rho)\in \VV(I_{\TQ})\setminus \VV(B_{\TQ})$ whose components are defined in equation \eqref{eqn:vrhom}.  Since $w\not\in \VV(B_{\TQ})$, the action of $T$ enables us to assume that for all $m\in \TQ_0$ there exists $\rho(m)\in \Sigma(1)$ such that $w_{\rho(m)}^m=1$. In particular, $v_{\rho(m)}^m=1$ for all relevant $m\in \TQ_0$.  Now, for $0\leq k\leq (s_1-1,\dots,s_\p-1)$, the morphism $\pi_k$ from \eqref{eqn:piN} sends the points $w$ and $v$ to 
 \[
 \pi_k(w)\in \VV(I_{R(k)})\setminus \VV(B_{\TQ(k)}) \quad \text{and}\quad \pi_k(v)\in \VV(I_{\TQ(k)})\setminus \VV(B_{\TQ(k)})
 \]
 respectively. We claim that $\pi_k(v)$ lies in the $T(k)$-orbit of $\pi_k(w)$. Given the claim, the special case $k=(s_1-1,\dots,s_\p-1)$ shows that the point $v$ lies in the $T$-orbit of the point $w$, so Theorem~\ref{thm:main} follows immediately from Proposition~\ref{prop:reduction}. 
 
 We prove the claim by induction on the vertex $k = (k_1,\dots,k_\p)$ using the total order on $\TQ_0$ from Section~\ref{sec:TQ}. The case $k=e_0$ is immediate, and for $(1,0,\dots,0)\leq k\leq (s_1-1,0,\dots,0)$ the claim follows from Example~\ref{exa:beilinson}; hereafter we assume that $\ell\geq2$. Suppose the claim holds for all $m<k$, so we may assume that $w_\rho^m = w_\rho$ for all $m<k$. It is enough to show for all $\rho\in \Sigma(1)$, that $w_{\rho(k)}\neq 0$ and
 \begin{equation}
\label{eqn:wrhoN}
w_{\rho}^k=w_{\rho}(w_{\rho(k)})^{-1},
\end{equation}
 because then we may let the one-dimensional subgroup $\kk^\times\subset T(k)$ scale by $w_{\rho(k)}$ at vertex $k$. Before establishing the claim \eqref{eqn:wrhoN}, we introduce some notation that we use in the proof.
  
 \begin{notation}
 \begin{enumerate}
 \item Recall from Section~\ref{sec:TQ} that vertices of the tilting quiver $Q^\prime$ are elements $k = (k_1,\dots,k_\ell)$ in the lattice $\ZZ^\ell$, so $k_i\in \ZZ$ for $1\leq i\leq \ell$. Note also (see Remark~\ref{rem:ei}) that the standard basis vectors $e_1,\dots, e_\ell$ of $\ZZ^\ell$ denote certain vertices of $Q^\prime$. This notation is standard and we hope that no confusion arises in what follows.
 \item It is convenient to distinguish certain elements of $Q_0$ and $\ZZ^\p$.
 \begin{itemize}
 \item First we distinguish certain elements of the vertex set $Q_0=\{0,1,\dots,\p\}$ of the original quiver. For the ray $\rho(k)$ appearing in \eqref{eqn:wrhoN}, define $0\leq \alpha<\beta\leq \p$ by 
 \[
 \alpha:=\tl(a_{\rho(k)})\quad\text{and}\quad \beta:=\hd(a_{\rho(k)}), 
 \]
 where $a_{\rho(k)}$ is the arrow in the original quiver $Q$ satisfying $\div(a_{\rho(k)})=D_{\rho(k)}$. Also, let $1\leq \delta\leq \p$ be minimal such that the induction vertex $k=(k_1,\dots,k_\p)$ satisfies $k_\delta\neq 0$, and define $0\leq \gamma<\delta$ by setting
  \[
 \gamma:=\tl(a_{\rho(e_\delta)}).
 \]
 Minimality of $\delta$ implies that either $\gamma=0$ or $k_\gamma=0$ and, moreover, that $\delta\leq \beta$.
 \item Next we introduce certain elements of $\ZZ^\p$. For any ray $\rho\in\Sigma(1)$, define 
 \[
 \du(\rho):=e_{\hd(a_\rho)}-e_{\tl(a_\rho)}\in  \ZZ^\p,
 \]
 where $a_\rho$ is the arrow in the original quiver satisfying $\div(a_\rho)=D_\rho$ (recall that $e_0:=0$). In particular, by the previous bullet point we have
 \[
 \du(\rho(k))=e_\beta-e_\alpha \quad \text{and}\quad \du(\rho(e_\delta))=e_\delta-e_\gamma.
 \]
 \end{itemize}
 \end{enumerate}
 \end{notation}
 
 \medskip
 
 We now return to the proof of the claim \eqref{eqn:wrhoN}, treating the cases $\delta<\beta$ and $\delta=\beta$ separately.
 
 \medskip
 
\noindent \textsc{Case 1:} Suppose first that $\delta<\beta$. In this case we proceed in three steps:

 \smallskip
\indent \textsc{Step 1:} Show that equation \eqref{eqn:wrhoN} holds for $\rho=\rho(e_\delta)$ when $\gamma=\alpha=0$ or $\gamma\neq \alpha$. We use generators of the ideal $I_{R(k)}$ corresponding to pairs of paths in $\TQ(k)$ with head at $k$. Consider paths of length two as in Figure~\ref{fig:relations1}, where for now we substitute $\rho(k)$ and $\rho(e_\delta)$ in place of $\rho_1$ and $\rho_2$.
\begin{figure}[!ht]
   \centering
		\begin{tikzpicture}[xscale=2,yscale=2]
		\tikzset{>=latex}
		
		\node (A) at (0,0) {$k-\du(\rho_1)-\du(\rho_2)$};
		\node (B) at (2,0) {$k-\du(\rho_2)$};
		\node (C) at (1.3,0.8) {$k-\du(\rho_1)$};
		\node (D) at (3.3,0.8) {$k$};
		
		\draw [->][red] (A) -- node [above] {\textcolor{black}{$\rho_1$}} (B);
		\draw [->][red] (C) -- node [above] {\textcolor{black}{$\rho_1$}} (D);
		\draw [->][blue] (B) -- node [below right] {\textcolor{black}{$\rho_2$}} (D);
		\draw [->][blue] (A) -- node [above left] {\textcolor{black}{$\rho_2$}} (C);
        \end{tikzpicture}
        \caption{}
        \label{fig:relations1}
\end{figure}
 In this case, we claim that each vertex in Figure~\ref{fig:relations1} lies in the quiver $\TQ(k)$. Indeed, $a_{\rho(k)}^k\in \TQ(k)_1$, so its head $k$ and tail $k-e_\beta+e_\alpha$ lie in $\TQ(k)_0$; this implies $k_\beta>0$ and either $\alpha=0$ or $k_\alpha< s_\alpha-1$. Also, $k_\delta>0$ and either $\gamma=0$ or $k_\gamma=0$, so $k-\du(\rho(e_\delta))$ is equal to $k-e_\delta+e_\gamma$, which lies in the quiver $\TQ(k)$. For the fourth vertex in Figure~\ref{fig:relations1}, either:
 \begin{enumerate}
 \item[\one] $\gamma=\alpha=0$, giving $e_\gamma=e_\alpha=0$, and the inequalities $k_\beta, k_\delta>0$ imply that the fourth vertex $k-e_\beta-e_\delta$ lies in $\TQ(k)_0$ as claimed; or  
 \item[\two] $\gamma\neq \alpha$, and since $\gamma<\delta<\beta$, the fourth vertex $k-e_\beta+e_\alpha-e_\delta+e_\gamma$ lies in $\TQ(k)_0$ because $k_\beta, k_\delta>0$, either $\alpha=0$ or $k_\alpha< s_\alpha-1$ and either $\gamma=0$ or $k_\gamma=0$.
 \end{enumerate} 
Figure~\ref{fig:relations1} therefore determines a binomial in $I_{R(k)}$ which implies that
\[
w_{\rho(e_\delta)}^{k-\du(\rho(k))}w_{\rho(k)}^k=w_{\rho(k)}^{k-\du(\rho(e_\delta))}w_{\rho(e_\delta)}^k.
\]
 Our induction assumption gives $w_\rho^m = w_\rho$ for all $m<k$, and since $w_{\rho(e_\delta)}=1=w_{\rho(k)}^k$,  we have $1 = w_{\rho(k)}w_{\rho(e_\delta)}^k$. In particular, $w_{\rho(k)}\neq 0$ and 
 \[
 w_{\rho(e_\delta)}^k=(w_{\rho(k)})^{-1}
 \]
 which establishes equation \eqref{eqn:wrhoN} for $\rho=\rho(e_\delta)$ when $\gamma=\alpha=0$ or $\gamma\neq \alpha$. 
 
 \medskip
\indent \textsc{Step 2:} Show that equation \eqref{eqn:wrhoN} holds for $\rho=\rho(e_\delta)$ when $\gamma=\alpha\neq 0$. Since $k_\alpha=k_\gamma=0$, the method from \textsc{Step 1} applies verbatim unless $s_\gamma=2$. In this case, define $0\leq \varepsilon < \gamma$ by 
\[
\varepsilon:=\tl(a_{\rho(e_\gamma)}),
\]
giving $\du(\rho(e_\gamma))=e_\gamma-e_\varepsilon$. Consider paths of length three as in Figure~\ref{fig:relations2}, where for now we substitute $\rho(k)$, $\rho(e_\delta)$ and $\rho(e_\gamma)$ in place of $\rho_1$, $\rho_2$ and $\rho_3$.  
 \begin{figure}[!ht]
   \centering
		\begin{tikzpicture}[xscale=2,yscale=2]
		\tikzset{>=latex}
		
		\node (A) at (0,0) {$k-\sum_{i=1}^3 \du(\rho_i)$};
		\node (B) at (2,0) {$k-\du(\rho_2)-\du(\rho_3)$};
		\node (C) at (4,0) {$k-\du(\rho_2)$};
		\node (D) at (1.3,0.8) {$k-\du(\rho_1)-\du(\rho_3)$};
		\node (E) at (3.3,0.8) {$k-\du(\rho_1)$};
		\node (F) at (5.3,0.8) {$k$};
		
		\draw [->][red] (A) -- node [above] {\textcolor{black}{$\rho_1$}} (B);
		\draw [->][black!35!green] (B) -- node [above] {\textcolor{black}{$\rho_3$}} (C);
		\draw [->][blue] (C) -- node [below right] {\textcolor{black}{$\rho_2$}} (F);
		\draw [->][blue] (A) -- node [above left, xshift=-2mm, yshift=-2mm] {\textcolor{black}{$\rho_2$}} (D);
		\draw [->][black!35!green] (D) -- node [above] {\textcolor{black}{$\rho_3$}} (E);
		\draw [->][red] (E) -- node [above] {\textcolor{black}{$\rho_1$}} (F);
		\end{tikzpicture}
        \caption{}
        \label{fig:relations2}
\end{figure}
 Again, we claim that each vertex in Figure~\ref{fig:relations2} lies in the quiver $\TQ(k)$; the proof is similar to that from \textsc{Step 1} (here, minimality of $\delta$ implies $\varepsilon=0$ or $k_\varepsilon=0$, and we use the inequalities $\varepsilon<\gamma<\delta<\beta$). Thus we obtain a binomial in $I_{R(k)}$ which, applying the inductive assumption $w_\rho^m=w_\rho$ for all $m<k$, gives 
 \[
 w_{\rho(e_\delta)}w_{\rho(e_\gamma)}w_{\rho(k)}^k=w_{\rho(k)}w_{\rho(e_\gamma)}w_{\rho(e_\delta)}^k.
 \]
 Since $w_{\rho(e_\delta)}=w_{\rho(e_\gamma)}=w^k_{\rho(k)} = 1$, we have $w_{\rho(k)}\neq 0$ and $w_{\rho(e_\delta)}^k=(w_{\rho(k)})^{-1}$ which implies that equation \eqref{eqn:wrhoN} holds for $\rho=\rho(e_\delta)$.
 
 \medskip

\indent \textsc{Step 3:} Show that equation \eqref{eqn:wrhoN} holds for all $\rho\in \Sigma(1)$. Consider any arrow $a_\rho^k$ in $\TQ$ with head at $k$. The vertices
\[
 \lambda:=\tl(a_\rho) \quad\text{and}\quad \mu:=\hd(a_\rho) 
\]
 satisfy $\du(\rho)=e_\mu-e_\lambda$ with $0\leq \lambda<\mu\leq \p$. We proceed using the approach from \textsc{Steps 1-2}:
\begin{enumerate}
\item[\one] If $\mu\neq \beta$, then we substitute $\rho$ and $\rho(k)$ in place of $\rho_1$ and $\rho_2$ in Figure~\ref{fig:relations1} as in \textsc{Step 1}, unless $\lambda=\alpha\neq 0$ and $s_\alpha=2$ in which case we substitute $\rho(e_\alpha)$ in place of $\rho_3$ in Figure~\ref{fig:relations2} as in \textsc{Step 2}. In either case, we obtain an equation relating components of $w_k$ which, after applying the inductive hypothesis if necessary, becomes
\[
w_{\rho(k)}w_\rho^k=w_\rho w_{\rho(k)}^k. 
\]
\textsc{Steps 1} and 2 established $w_{\rho(k)}\neq 0$, and $w_{\rho(k)}^k = 1$, so equation \eqref{eqn:wrhoN} holds.
 \item[\two] Otherwise, $\mu=\beta$. Substitute $\rho(e_\delta)$ and $\rho$ in place of  $\rho_1$ and $\rho_2$ in Figure~\ref{fig:relations1} as in \textsc{Step 1}, unless $\lambda=\gamma\neq0$ and $s_\gamma=2$ in which case we substitute $\rho(e_\gamma)$ in place of $\rho_3$ in Figure~\ref{fig:relations2} as in \textsc{Step 2}. As in part \one\ above, we obtain an equation which simplifies to 
\begin{equation}
\label{eqn:step3}
w_{\rho}^k=w_{\rho}w_{\rho(e_\delta)}^k.
\end{equation}
 \textsc{Steps 1} and \textsc{2} established $w_{\rho(e_\delta)}^k=(w_{\rho(k)})^{-1}$, so equation \eqref{eqn:wrhoN} follows.
\end{enumerate}
This completes the proof of equation \eqref{eqn:wrhoN} in \textsc{Case 1}.

 \medskip
\noindent \textsc{Case 2:} Suppose instead that $\delta=\beta$. If $k_\delta>1$ then the proof is identical to \textsc{Case 1}. If on the other hand $k_\delta=1$, then the vertex $k-\du(\rho(e_\delta))-\du(\rho(k)) = k-2e_\delta+e_\gamma+e_\alpha$ that plays a key role in \textsc{Case 1} does not lie in $\TQ(k)_0$. In the special case that $k=e_\delta$, making $k$ a vertex of the base quiver, then we have $w_\rho^k = w_\rho$ for all relevant $\rho\in \Sigma(1)$ and there is nothing to prove. If $k\neq e_\delta$, we introduce another useful vertex of the original quiver: let $\xi$ be minimal such that $\delta<\xi\leq \p$ and $k_\xi\neq 0$, and define $0\leq \eta <\xi$ by setting
 \[
\eta:= \tl(a_{\rho(e_\xi)})
 \]
 giving $\du(\rho(e_\xi))=e_\xi-e_\eta$. We treat the cases $\eta\neq \delta$ and $\eta=\delta$ separately.
 
 \medskip\indent \textsc{Subcase 2A}: If $\eta\neq \delta (=\beta)$, then either $\eta=0$ or $k_\eta=0$, so $k-\du(\rho(e_\xi)) = k-e_\xi+e_\eta$ is a vertex of $\TQ(k)_0$. We may now proceed just as in \textsc{Case 1} except that $\rho(e_\xi)$ replaces $\rho(e_\delta)$ throughout (so $\xi$ and $\eta$ replace $\delta$ and $\gamma$ respectively).
 
\medskip
\indent \textsc{Subcase 2B:} Suppose instead that $\eta=\delta (=\beta)$. We've already reduced to the case $k_\delta=1$. If $s_\delta>2$ then once again, $k-\du(\rho(e_\xi)) = k-e_\xi+e_\delta$ is a vertex of $\TQ(k)_0$ and we proceed as in \textsc{Case 1} with $\rho(e_\xi)$ replacing $\rho(e_\delta)$ throughout. If $s_\delta=2$, then we proceed as follows:

\smallskip
\indent \textsc{Step 1:} Show that $w_{\rho(k)}\neq 0$. If $\gamma\neq \alpha$ or $\gamma=\alpha=0$, then we use Figure~\ref{fig:relations2} with $\rho_1=\rho(k)$, $\rho_2=\rho(e_\delta)$ and $\rho_3=\rho(e_\xi)$ to obtain the equation
\[
1=w_{\rho(e_\delta)}w_{\rho(e_\xi)}w_{\rho(k)}^k=w_{\rho(k)}w_{\rho(e_\xi)}w_{\rho(e_\delta)}^k
\]
which gives $w_{\rho(k)}\neq 0$. Otherwise, $\gamma=\alpha\neq 0$, giving $\du(\rho(k))=e_\delta-e_\gamma = \du(\rho(e_\delta))$. It may be that 
$\rho(k)=\rho(e_\delta)$, in which case $w_{\rho(k)}=w_{\rho(e_\delta)}=1$ and hence $w_{\rho(k)}\neq 0$ as required. If $\rho(k)\neq\rho(e_\delta)$, then consider the pair of paths of length four as in Figure~\ref{fig:relations3}, where we substitute $\rho(k)$, $\rho(e_\delta)$, $\rho(e_\xi)$ and $\rho(e_\gamma)$ in place of $\rho_1,\dots,\rho_4$ (in fact, both paths pass through the same set of vertices in this case).
\begin{figure}[!ht]
   \centering
		\begin{tikzpicture}[xscale=2,yscale=2]
		\tikzset{>=latex}
		
		\node (A) at (0,0) {$k-\sum_{t=1}^4 \du(\rho_t)$};
		\node (B) at (2,0) {$k-\sum_{t=2}^4 \du(\rho_t)$};
		\node (C) at (4,0) {$k-\du(\rho_2)-\du(\rho_3)$};
		\node (D) at (1,0.8) {$k-\sum_{t\neq2} \du(\rho_t)$};
		\node (E) at (3,0.8) {$k-\du(\rho_1)-\du(\rho_3)$};
		\node (F) at (5,0.8) {$k-\du(\rho_1)$};
        \node (G) at (6,0) {$k-\du(\rho_2)$};
        \node (H) at (7,0.8) {$k$};
		
		\draw [->][red] (A) -- node [above] {\textcolor{black}{$\rho_1$}} (B);
		\draw [->][magenta] (B) -- node [above] {\textcolor{black}{$\rho_4$}} (C);
		\draw [->][blue] (A) -- node [above left, xshift=-2mm, yshift=-2mm] {\textcolor{black}{$\rho_2$}} (D);
		\draw [->][magenta] (D) -- node [above] {\textcolor{black}{$\rho_4$}} (E);
		\draw [->][black!35!green] (E) -- node [above] {\textcolor{black}{$\rho_3$}} (F);
        \draw [->][blue] (G) -- node [below right] {\textcolor{black}{$\rho_2$}} (H);
        \draw [->][black!35!green] (C) -- node [above] {\textcolor{black}{$\rho_3$}} (G);
        \draw [->][red] (F) -- node [above] {\textcolor{black}{$\rho_1$}} (H);
		\end{tikzpicture}
        \caption{}
        \label{fig:relations3}
\end{figure}

\noindent We obtain the equation
\[
1=w_{\rho(e_\delta)}w_{\rho(e_\gamma)}w_{\rho(e_\xi)}w_{\rho(k)}^k=w_{\rho(k)}w_{\rho(e_\gamma)}w_{\rho(e_\xi)}w_{\rho(e_\delta)}^k
\]
which gives $w_{\rho(k)}\neq 0$ and completes \textsc{Step 1}. 

\medskip
\indent \textsc{Step 2:} Show that equation \eqref{eqn:wrhoN} holds for all $\rho\in \Sigma(1)$. For any $a_\rho^k\in \TQ_1$, the vertices
\[
\lambda:=\tl(a_\rho) \quad\text{and}\quad \mu:=\hd(a_\rho) 
\]
 satisfy $\du(\rho)=e_\mu-e_\lambda$ with $0\leq \lambda<\mu\leq \p$.
\begin{enumerate}
\item[\one] If $\mu>\delta$, use Figure~\ref{fig:relations1} with $\rho_1=\rho(k)$ and $\rho_2=\rho$, unless $\lambda=\alpha\neq 0$ and $s_\alpha=2$ in which case use Figure~\ref{fig:relations2} with the addition of $\rho_3=\rho(e_\alpha)$. Either way, we obtain the equation $w_{\rho(k)}w_\rho^k=w_\rho w_{\rho(k)}^k$ which, since $w_{\rho(k)}\neq 0$ by \textsc{Step 1}, gives \eqref{eqn:wrhoN}.
\item[\two] If $\mu=\delta$, use Figure~\ref{fig:relations2} with $\rho_1=\rho(k)$, $\rho_2=\rho$ and $\rho_3=\rho(e_\xi)$ unless $\lambda=\alpha\neq 0$ and $s_\alpha=2$ in which case use Figure~\ref{fig:relations3} with the addition of $\rho_4=\rho(e_\alpha)$. Either way, we obtain $w_{\rho(k)}w_\rho^k=w_\rho w_{\rho(k)}^k$ which, since $w_{\rho(k)}\neq 0$ by \textsc{Step 1}, gives \eqref{eqn:wrhoN}.
\end{enumerate}
This concludes the proof in \textsc{Case 2}, and completes the proof of Theorem~\ref{thm:main}.
\end{proof}

\begin{remark}
 Our approach relies on the explicit description of the image of the morphism $f_E$ in Theorem~\ref{thm:main} as the GIT quotient $\VV(I_{\TQ})\git_\theta T$, see \cite[Theorem~1.1]{CS08}. We do not at present have a similar description in the non-toric setting.
\end{remark} 

\begin{example} We conclude with an example to illustrate the proof of Theorem~\ref{thm:main}. Let $Q$ and $\TQ$ be the quivers in Figure~\ref{fig:bothQuivers}, so $\p=3$. Suppose $k=(0,1,1)\in \TQ_0$, so $\delta=2$. The three arrows with head at $k$ have tails at $(1,1,0)$ (light blue), $(0,0,1)$ (red) and $(1,0,1)$ (blue),
\begin{figure}[!ht]
   \centering
            \begin{tikzpicture}[xscale=1.7,yscale=1.7]
			\tikzset{>=latex}
			
			\node (A) at (0,0) {$(0,0,0)$};
			\node (B) at (2,0) {$(1,0,0)$};
			\node (C) at (1.3,0.7) {$(0,1,0)$};
			\node (D) at (0,2) {$(0,0,1)$};
			\node (E) at (4,0) {$(2,0,0)$};
			\node (F) at (3.3,0.7) {$(1,1,0)$};
			\node (G) at (2,2) {$(1,0,1)$};
			\node (H) at (1.3,2.7) {$(0,1,1)$};
			\node (I) at (5.3,0.7) {$(2,1,0)$};
			\node (J) at (4,2) {$(2,0,1)$};
			\node (K) at (3.3,2.7) {$(1,1,1)$};
			\node (L) at (5.3,2.7) {$(2,1,1)$};
			
			\draw [->][cyan] (B) to [out=165, in=270] (D);
			\draw [->][cyan] (E) to [out=165, in=270] (G);
			\draw [dashed, ->][cyan] (F) to [out=165, in=270] (H);      
         	\draw [dashed, ->][cyan] (I) to [out=165, in=270] (K);
			
			\draw [->][magenta] (C) -- (D);
			\draw [->][magenta] (F) -- (G);
			\draw [->][magenta] (I) -- (J);
			
			\draw [->][red] (A) -- (C);
			\draw [->][red] (B) -- (F);
			\draw [->][red] (E) -- (I);
			\draw [->][red] (D) -- (H);
			\draw [->][red] (G) -- (K);
			\draw [->][red] (J) -- (L);
			
			\draw [->][blue] (B) -- (C);
			\draw [->][blue] (E) -- (F);
			\draw [->][blue] (G) -- (H);
			\draw [->][blue] (J) -- (K);
			
			\draw [->][green] (A.10) -- (A.10-|B.west);
			\draw [->][black!35!green] (A) -- (B);
			\draw [->][black!70!green] (A.-10) -- (A.-10-|B.west);
			
			\draw [->][green] (B.10) -- (B.10-|E.west);
			\draw [->][black!35!green] (B) -- (E);
			\draw [->][black!70!green] (B.-10) -- (B.-10-|E.west);
			
			\draw [->][green] (C.10) -- (C.10-|F.west);
			\draw [->][black!35!green] (C) -- (F);
			\draw [->][black!70!green] (C.-10) -- (C.-10-|F.west);   
            
			\draw [->][green] (F.10) -- (F.10-|I.west);
			\draw [->][black!35!green] (F) -- (I);
			\draw [->][black!70!green] (F.-10) -- (F.-10-|I.west);
			
			\draw [->][green] (D.10) -- (D.10-|G.west);
			\draw [->][black!35!green] (D) -- (G);
			\draw [->][black!70!green] (D.-10) -- (D.-10-|G.west);
			
			\draw [->][green] (G.10) -- (G.10-|J.west);
			\draw [->][black!35!green] (G) -- (J);
			\draw [->][black!70!green] (G.-10) -- (G.-10-|J.west);
			
			\draw [->][green] (H.10) -- (H.10-|K.west);
			\draw [->][black!35!green] (H) -- (K);
			\draw [->][black!70!green] (H.-10) -- (H.-10-|K.west);
			
			\draw [->][green] (K.10) -- (K.10-|L.west);
			\draw [->][black!35!green] (K) -- (L);
			\draw [->][black!70!green] (K.-10) -- (K.-10-|L.west);
			
			\end{tikzpicture}
           \caption{The tilting quiver of $Q$ from Figure~\ref{fig:bothQuivers}}
           \label{fig:examplequiver}
\end{figure}
 and we label the corresponding rays $\rho_1, \rho_2$ and $\rho_3$ respectively. We now illustrate in two different situations why $w_{\rho(k)}\neq 0$ and why the equation $w_{\rho}^k=w_{\rho}(w_{\rho(k)})^{-1}$ from \eqref{eqn:wrhoN} holds for $\rho=\rho_1, \rho_2, \rho_3$.
\begin{enumerate}
\item  Suppose that $\rho(k)=\rho_1$. Then $\beta=3$ and $\alpha=1$ (see Figure~\ref{fig:basequiverQ}), and $w^k_{\rho_1}=1$. Suppose $\rho(e_\delta)=\rho(e_2)=\rho_2$ so that $\gamma=0$ and $w_{\rho_2}=1$. This is an example of \textsc{Case 1} as $\delta<\beta$, and since $\gamma=0$ we require only \textsc{Step 1}. In this case Figure~\ref{fig:relations1} becomes
\begin{figure}[!ht]
   \centering
		\begin{tikzpicture}[xscale=2,yscale=2]
		\tikzset{>=latex}
		
		\node (A) at (0,0) {$(1,0,0)$};
		\node (B) at (2,0) {$(1,1,0)$};
		\node (C) at (1.3,0.8) {$(0,0,1)$};
		\node (D) at (3.3,0.8) {$(0,1,1)$};
		
		\draw [->][red] (A) -- node [above] {\textcolor{black}{$\rho_2$}} (B);
		\draw [->][red] (C) -- node [above] {\textcolor{black}{$\rho_2$}} (D);
		\draw [->][cyan] (B) -- node [below right] {\textcolor{black}{$\rho_1$}} (D);
		\draw [->][cyan] (A) -- node [above left] {\textcolor{black}{$\rho_1$}} (C);
        \end{tikzpicture}
        \label{fig:example1}
\end{figure}

\noindent and the relation gives the equation $w_{\rho_2} w^k_{\rho_1}=w_{\rho_1} w^k_{\rho_2}$. Moreover, $w_{\rho_2} =1=w^k_{\rho_1}$ implies $w_{\rho_1}\neq0$ and $w^k_{\rho_2}=(w_{\rho_1})^{-1}$ which establishes \eqref{eqn:wrhoN} for $\rho=\rho_1, \rho_2$. The remaining arrow $a^k_{\rho_3}$ with head at $k$ requires \textsc{Step 3}, and in this case for $\rho=\rho_3$ we have $\mu=2$ and $\lambda=1$. Since $\mu\neq \beta$ and $s_\alpha=s_1\neq2$, we require \textsc{Step 3}(i) to deduce $w_{\rho_3} w^k_{\rho_1}=w_{\rho_1} w^k_{\rho_3}$. This implies $w^k_{\rho_3}=w_{\rho_3} (w_{\rho_1})^{-1}$, establishing \eqref{eqn:wrhoN} for $\rho=\rho_3$.
\item Suppose $\rho(k)=\rho_3$, so $\beta=2$, $\alpha=1$ and $w^k_{\rho_3}=1$. Suppose that $\rho(e_2)=\rho_2$, so $\gamma=0$ and $w_{\rho_2}=1$. Since $\delta=\beta$ and $k_\delta=k_2=1$, this is an example of \textsc{Case 2}. Since $k\neq e_2$, we compute $\xi=3$. Write $\rho_4$ for the label of the pink arrow with head at $(0,0,1)$ and tail at $(0,1,0)$, and  suppose $\rho(e_3)=\rho_4$. Then $\eta=\tl(a_{\rho_4})=2$ and $w_{\rho_4}=1$. Since $\eta=\delta$ and $s_\delta=2$, we require \textsc{Subcase~2B}. Following \textsc{Step 1}, since $\gamma=0$ we use Figure~\ref{fig:relations2} as shown below.
\begin{figure}[!ht]
   \centering
		\begin{tikzpicture}[xscale=2,yscale=2]
		\tikzset{>=latex}
		
		\node (A) at (0,0) {$(1,0,0)$};
		\node (B) at (2,0) {$(0,1,0)$};
		\node (C) at (4,0) {$(0,0,1)$};
		\node (D) at (1.3,0.8) {$(1,1,0)$};
		\node (E) at (3.3,0.8) {$(1,0,1)$};
		\node (F) at (5.3,0.8) {$k$};
		
		\draw [->][blue] (A) -- node [above] {\textcolor{black}{$\rho_3$}} (B);
		\draw [->][magenta] (B) -- node [above] {\textcolor{black}{$\rho_4$}} (C);
		\draw [->][red] (C) -- node [below right] {\textcolor{black}{$\rho_2$}} (F);
		\draw [->][red] (A) -- node [above left, xshift=-2mm, yshift=-2mm] {\textcolor{black}{$\rho_2$}} (D);
		\draw [->][magenta] (D) -- node [above] {\textcolor{black}{$\rho_4$}} (E);
		\draw [->][blue] (E) -- node [above] {\textcolor{black}{$\rho_3$}} (F);
        \end{tikzpicture}
\end{figure}
 This yields the equation $w_{\rho_2} w_{\rho_4} w^k_{\rho_3}=w_{\rho_3} w_{\rho_4} w^k_{\rho_2}$ which simplifies to $1=w_{\rho_3} w^k_{\rho_2}$, giving $w_{\rho_3}\neq0$ as required. \textsc{Step~2} of \textsc{Subcase~2B} establishes \eqref{eqn:wrhoN} for $\rho=\rho_1,\rho_2,\rho_3$: we already know this for $\rho=\rho_3$ by assumption; the case $\rho=\rho_2$ is provided by \textsc{Step~1} since $w^k_{\rho_2}=(w_{\rho_3})^{-1}$; and the case $\rho=\rho_1$ is a simple application of \textsc{Step 2}$\one$, where we apply Figure~\ref{fig:relations1} to the rectangle with  vertices $(2,0,0),(1,1,0),(1,0,1),k$ and arrows labelled $\rho_1$ and $\rho_3$.
\end{enumerate}
\end{example}

\bibliographystyle{alpha}

\end{document}